\newtheorem{theorem}{Theorem}[section]
\newtheorem{proposition}[theorem]{Proposition}
\newtheorem{lemma}[theorem]{Lemma}
\newtheorem{corollary}[theorem]{Corollary}
\theoremstyle{definition}
\newtheorem{definition}[theorem]{Definition}
\numberwithin{equation}{section}
\newcommand{\C}{\mathbb{C}}
\newcommand{\N}{\mathbb{N}}
\newcommand{\Z}{\mathbb{Z}}
\newcommand{\EC}{\mathcal{C}^\mathrm{EC}}
\newcommand{\coker}{\operatorname{coker}}
\begin{document}
\baselineskip=15.5pt

\title[Semi-finite vector bundles with connection]{On the semi-finite vector bundles with connection over K\"ahler manifolds}

\author{Sanjay Amrutiya}
\address{Department of Mathematics, IIT Gandhinagar,
 Near Village Palaj, Gandhinagar - 382355, India}
 \email{samrutiya@iitgn.ac.in}

\author{Indranil Biswas}

\address{Department of Mathematics, Shiv Nadar University, NH91, Tehsil
Dadri, Greater Noida, Uttar Pradesh 201314, India}

\email{indranil.biswas@snu.edu.in, indranil29@gmail.com}

\subjclass[2010]{Primary: 53C07; Secondary: 14C34, 16D90, 14K20}

\keywords{Integrable connection, finite bundle, semi-finite bundle, Tannakian category}

\date{}

\begin{abstract}
Let $X$ be a compact connected K\"ahler manifold. We consider the category 
$\EC(X)$ of flat holomorphic connections $(E,\, \nabla^E)$ over $X$ satisfying the condition that the underlying
holomorphic vector bundle $E$ admits a filtration of holomorphic subbundles preserved by the connection 
$\nabla^E$ such that the monodromy of the induced connection on each successive quotient has finite image.
The category $\EC(X)$, equipped with the neutral fiber functor that 
sends any object $(E,\, \nabla^E)$ to the fiber $E_{x_0}$, where $x_0\, \in\, X$ is a
fixed point, defines a neutral Tannakian 
category over $\C$. Let $\varpi^{\mathrm{EC}}(X,\, x_0)$ denote the affine group scheme 
corresponding to this neutral Tannakian category $\EC(X)$. Let $\pi^{\mathrm{EN}}(X,\, x_0)$ be an 
extension of the Nori fundamental group scheme over $\C$ \cite{Ot}. 

We show that $\pi^{\mathrm{EN}}(X,\, x_0)$ is a closed subgroup scheme of $\varpi^{\mathrm{EC}}(X,\, x_0)$.
Finally, we discuss an example illustrating that if $X$ is not K\"ahler, then the natural homomorphism
$\pi^{\mathrm{EN}}(X,\, x_0)\, \longrightarrow\, \varpi^{\mathrm{EC}}(X,\, x_0)$ might fail to be an embedding.
\end{abstract}
\maketitle

\section{Introduction}\label{intro}

Tannaka duality has played a key role in the development of various generalizations 
of fundamental groups, designed to capture the geometry of schemes and complex 
manifolds (see \cite{DM} for the Tannakian category). A seminal example of such a 
construction is the foundational work of M. Nori \cite{No}.

Recall that a vector bundle $E$ over a scheme $X$ is said to be finite if there are 
two distinct polynomials $f$ and $g$ with non-negative integer coefficients such that 
$f(E)$ is isomorphic to $g(E)$ (see Definition \ref{def-finite}).
Over the field of complex numbers, it follows from \cite{No} that a vector bundle 
$E$ is finite if and only if it is given by a representation of the fundamental 
group $\pi_1(X)$ that factors through a finite group.

Let $X$ be a compact connected K\"ahler manifold, and let $E$ be a holomorphic vector 
bundle over $X$. Using the results from \cite{DPS} and \cite{Si}, it is shown in 
\cite{BHS} that $E$ admits a flat holomorphic connection with finite monodromy group
if and only if  $E$ is finite vector bundle over $X$.

Otabe \cite{Ot} introduced a broader category than that of finite bundles in 
characteristic zero. He calls a vector bundle $E$ over $X$ to be semi-finite 
if it admits a filtration of subbundles such that each successive quotient is a finite bundle 
(see Definition \ref{def-semifinite}). The category of semi-finite bundles over $X$, 
equipped with the neutral fiber functor, forms a neutral Tannakian category over the base field $k$. 
The corresponding affine group scheme is called the extended Nori fundamental group 
scheme and it is denoted by $\pi^{\mathrm{EN}}(X,\,x)$. 

C. Simpson \cite{Si} considered the category of all flat holomorphic connections 
$(E,\, \nabla^E)$ over a compact connected K\"ahler manifold $X$. 
This category is equipped with the operators of direct sum, tensor product, and 
dualization, and it is a neutral Tannakian category with the usual fiber functor given 
by a base point $x_0$ (see \cite[p. 70]{Si}). The resulting fundamental group scheme 
over $\C$ is denoted by $\varpi(X,\, x_0)$, which represents a pro-algebraic completion 
of the topological fundamental group. 

We consider the category $\EC(X)$ of all flat holomorphic connections $(E,\, \nabla^E)$ over 
$X$, where the underlying holomorphic vector bundle $E$ admits a filtration of subbundles 
preserved by the connection $\nabla^E$, such that the induced connection on each 
successive quotient has finite monodromy (see Section \ref{sec-2}). The category 
$\EC(X)$, equipped with the neutral fiber functor that sends any object $(E,\, \nabla^E)$ 
to the fiber $E_{x_0}$, defines a neutral Tannakian category over $\C$. 
Let $\varpi^{\mathrm{EC}}(X,\, x_0)$ denote the affine group scheme corresponding to 
this Tannakian category. There is a natural faithfully flat homomorphism:
$$
\varpi(X,\, x_0)\ \longrightarrow\ \varpi^{\mathrm{EC}}(X,\, x_0)
$$
of affine group schemes over $\C$ (see Theorem \ref{thm-fgsk}). 

Next, we define a forgetful functor 
$$
\xi \,:\, \EC(X) \,\longrightarrow\, \mathcal{C}^{\mathrm{EN}}(X)
$$
that sends any pair $(E,\, \nabla^E)$ to the underlying vector bundle $E$. This induces
a morphism of affine group schemes over $\C$:
$$
\xi^* \ :\ \pi^{\mathrm{EN}}(X,\,x_0) \ \longrightarrow\ \varpi^{\mathrm{EC}}(X,\, x_0)\,.
$$
We prove the following result (see Theorem \ref{main-thm}):

\begin{theorem}
The induced morphism 
$\xi^* \ :\ \pi^{{\rm EN}}(X,\,x_0) \ \longrightarrow\ \varpi^{\rm EC}(X,\, x_0)$ 
is a closed immersion.
\end{theorem}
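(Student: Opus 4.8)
The plan is to deduce the statement from the standard Tannakian recognition criterion for closed immersions. First I would check that $\xi$ is a well-defined exact $\C$-linear tensor functor compatible with the neutral fiber functors. Indeed, given an object $(E,\, \nabla^E)$ of $\EC(X)$, the successive quotients of its $\nabla^E$-invariant filtration carry flat connections with finite monodromy, hence are finite bundles by \cite{BHS}, so the underlying bundle $E = \xi(E,\, \nabla^E)$ is semi-finite and genuinely lies in $\mathcal{C}^{\mathrm{EN}}(X)$. Exactness, $\C$-linearity, faithfulness, and compatibility with the fiber functors (both send an object to its fiber over $x_0$) are then immediate. By \cite[Proposition 2.21]{DM}, the induced homomorphism $\xi^*$ is a closed immersion if and only if every object of $\mathcal{C}^{\mathrm{EN}}(X)$ is a subquotient of $\xi(E,\, \nabla^E)$ for some object $(E,\, \nabla^E)$ of $\EC(X)$.

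I would establish this by proving the stronger statement that $\xi$ is essentially surjective, that is, every semi-finite bundle underlies an object of $\EC(X)$; the subquotient condition of the criterion then holds trivially. This does not make $\xi^*$ an isomorphism, because $\xi$ is not full: a bundle generally carries many connections, so $\pi^{\mathrm{EN}}(X,\, x_0)$ remains a \emph{proper} closed subgroup scheme. The argument is by induction on the length $n$ of a filtration of a semi-finite bundle $E$ with finite successive quotients. For $n = 1$ the bundle $E$ is finite and \cite{BHS} provides a flat holomorphic connection on $E$ with finite monodromy, which is an object of $\EC(X)$. For the inductive step, write a short exact sequence of holomorphic bundles $0 \,\longrightarrow\, E' \,\longrightarrow\, E \,\longrightarrow\, Q \,\longrightarrow\, 0$, where $Q$ is finite and $E'$ is semi-finite; by induction $E'$ carries a connection $\nabla'$ in $\EC(X)$, while $Q$ carries a finite-monodromy flat connection $\nabla_Q$. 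It remains to produce a flat connection $\nabla^E$ on $E$ restricting to $\nabla'$ on $E'$ and inducing $\nabla_Q$ on $Q$; such a $\nabla^E$ then preserves the enlarged filtration and has finite monodromy on each graded piece, so $(E,\, \nabla^E)$ lies in $\EC(X)$.

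The crux of the argument, and the step I expect to be the main obstacle, is this extension-realization. The holomorphic extensions of $Q$ by $E'$ are classified by $H^1\left(X,\, \mathcal{H}om(Q, E')\right)$, while the flat extensions of $(Q,\, \nabla_Q)$ by $(E',\, \nabla')$ are classified by the de Rham cohomology $H^1_{\mathrm{dR}}\left(X,\, \mathcal{H}om(Q, E')\right)$ of the flat bundle $\mathcal{H}om(Q, E')$ with its induced connection, and forgetting the connection defines a natural comparison map between them. What must be shown is that this comparison map is surjective, so that the prescribed holomorphic class of $E$ lies in its image and is therefore realized by a flat connection. I would prove this surjectivity using Hodge theory on the compact Kähler manifold $X$ (the $\partial\bar\partial$-lemma applied to the harmonic representatives of the relevant cohomology, together with the structure of semi-finite, hence numerically flat, bundles in the sense of \cite{DPS} and \cite{Si}). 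This is precisely the point at which the Kähler hypothesis enters in an essential way, consistent with the announced example showing that $\xi^*$ may fail to be an embedding when $X$ is not Kähler.
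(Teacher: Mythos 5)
Your overall strategy coincides with the paper's: both proofs establish that every semi-finite bundle $E$ underlies an object of $\EC(X)$ (so $\xi$ is essentially surjective, which is stronger than the subquotient condition required by \cite[Proposition 2.21]{DM}), and the closed immersion follows. The difference lies entirely in how the crux --- producing a flat connection on $E$ compatible with the given filtration --- is handled. The paper does it in one stroke: each graded piece $E_i/E_{i+1}$ is finite, hence polystable with vanishing Chern classes and carrying a unique unitary flat connection by \cite{BHS}, and \cite[Corollary 3.10 and the remark following it]{Si} then yields a (unique) flat holomorphic connection on $E$ preserving the filtration and inducing those connections on the graded pieces. You instead propose an induction on the length of the filtration, reducing to the surjectivity of the comparison map from flat extensions of $(Q,\,\nabla_Q)$ by $(E',\,\nabla')$ to holomorphic extensions of $Q$ by $E'$, i.e.\ of $H^1_{\mathrm{dR}}\left(X,\, \mathcal{H}om(Q,E')\right) \longrightarrow H^1\left(X,\, \mathcal{H}om(Q,E')\right)$.

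The gap is in your proposed proof of that surjectivity. In the base case, where $\mathcal{H}om(Q,E')$ carries a unitary flat structure, the Hodge decomposition of $H^1$ of a unitary local system does give the surjectivity. But in the inductive step $E'$ is only a successive extension of unitary flat bundles, so the local system $\mathcal{H}om(Q,E')$ is not semisimple; there is no harmonic-representative or $\partial\bar\partial$-lemma argument that applies directly to its cohomology, and this is precisely where one needs the full strength of Simpson's equivalence between flat bundles and semistable Higgs bundles with vanishing Chern classes, which identifies the two extension groups functorially. In other words, the tool you defer to (``Hodge theory on $X$''), once made precise for non-unitary coefficients, is essentially the result \cite[Corollary 3.10]{Si} that the paper cites; as written, your sketch of the key surjectivity does not go through in the inductive step. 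Everything else in your proposal --- the well-definedness of $\xi$, the reduction via \cite[Proposition 2.21]{DM}, and the identification of where the K\"ahler hypothesis enters --- is correct and matches the paper.
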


Finally, we discuss an example illustrating that if $X$ is not K\"ahler, the above theorem 
does not hold in general.

\section{Semi--finite bundles and flat connections}\label{sec-2}

Let $X$ be a compact connected K\"ahler manifold. The holomorphic tangent and cotangent bundles
of $X$ will respectively be denoted by $TX$ and $\Omega_X^1$.

Let $f(t) = \sum_{k=0}^n a_k t^k\,\in\, \N[t]$ be a polynomial
whose coefficients are nonnegative integers. For any holomorphic vector bundle
$E$ over $X$, define the holomorphic vector bundle
$$
f(E)\ :=\ \bigoplus_{k=0}^n (E^{\otimes k})^{\oplus a_k},
$$
where $E^{\otimes 0}$ denotes the trivial line bundle ${\mathcal O}_X$.

\begin{definition}[{\cite{No}}]\label{def-finite}
A holomorphic vector bundle $E$ over $X$ is said to be finite if there are two distinct polynomials $f$ and $g$,
whose coefficients are nonnegative integers, such that $f(E)$ is holomorphically isomorphic to $g(E)$.
\end{definition} 

A flat connection $\nabla$ on a holomorphic vector bundle $E$ on $X$ is called a holomorphic flat connection
if the sheaf of flat sections of $E$ for $\nabla$ is contained in the sheaf of holomorphic sections.

A holomorphic vector bundle $E$ on $X$ is finite if and only if it admits a flat
holomorphic connection for which the monodromy group is finite \cite{No} (see also
\cite{BHS}).

\begin{definition}[{\cite{Ot}}]\label{def-semifinite}
A holomorphic vector bundle $E$ on $X$ is called \textit{semi--finite} if it has a filtration of holomorphic subbundles 
$$
E\,=\,E_0\,\supset\, E_1 \,\supset\, E_2 \,\supset\,\cdots\, \supset\, E_{n+1}\,=\,0
$$ such that $E_i/E_{i+1}$ is a finite bundle, for all $0\, \leq\, i\, \leq\, n$.
\end{definition} 

Semi--finite vector bundles form a Tannakian category \cite[Proposition 2.14]{Ot}.
Let $\mathcal{C}^{\mathrm{EN}}(X)$ be the Tannakian category of semi--finite vector
bundles over $X$. Fix a base point $x_0 \,\in \,X$. Then, we get a neutral fiber functor on
$\mathcal{C}^{\mathrm{EN}}(X)$ that sends any $E$ to its
fiber $E_{x_0}$. The corresponding affine group scheme is denoted by $\pi^{\mathrm{EN}}(X,\,x_0)$ 
(see \cite{Ot} for more details).

\subsection{Flat connections}

Let $\mathbf{Conn}(X)$ be the category of all pairs $(E,\,\nabla^E)$, where $E$ is a 
holomorphic vector bundle on $X$ and $\nabla^E\,:\, E \,\longrightarrow
\, E \otimes_{\mathcal{O}_X} \Omega_X^1$ is a flat holomorphic connection on $E$.
A morphism $f\,:\, (E,\, \nabla^E) \,\longrightarrow\, (F,\, \nabla^F)$ in $\mathbf{Conn}(X)$
is a morphism of vector bundles $f_0\, :\, E\, \longrightarrow\, F$ that intertwines $\nabla^E$ 
and $\nabla^F$, meaning
$$
\nabla^F\circ f_0 \ =\ (f_0\otimes {\rm Id}_{\Omega_X^1})\circ\nabla^E.
$$

One can perform the usual operations between the objects in $\mathbf{Conn}(X)$ as follows:
Take two objects $(E,\,\nabla^E)$ and $(F, \,\nabla^F)$ in $\mathbf{Conn}(X)$.
\begin{itemize}
\item Direct Sum: $\nabla^{E\oplus F}\,:\, E\oplus F\,\longrightarrow \,(E \oplus F)\otimes_{\mathcal{O}_X} 
\Omega_X^1$ given by $\nabla^{E\oplus F}(s\oplus t)\,=\, \nabla^E(s) + \nabla^F(t)$, where 
$s$ and $t$ are locally defied holomorphic sections of $E$ and $F$ respectively.

\item Tensor Product: $\nabla^{E\otimes F}\,:\, E\otimes F\, \longrightarrow\, (E \otimes F)
\otimes_{\mathcal{O}_X} \Omega_X^1$ given by $\nabla^{E\otimes F}(s\otimes t)\,=\,
\nabla^E(s)\otimes t + s\otimes \nabla^F(t)$.

\item Dual: $\nabla^{E^*}\,:\, E^* \,\longrightarrow\, E^* \otimes_{\mathcal{O}_X} \Omega_X^1$ given by
$\nabla^{E^*}(\sigma)(s) = d(\sigma(s)) - \sigma(\nabla^E(s))$, where $\sigma$ (respectively, $s$) is a locally
defined holomorphic section of $E^*$ (respectively, $E$).
\end{itemize} 

Note that the category $\mathbf{Conn}(X)$ is an abelian $\C$-linear rigid tensor category with the
above operations. Fix a base point $x_0 \,\in\, X$. The category $\mathbf{Conn}(X)$, equipped with the
faithful fiber functor $T_{x_0} \,:\, \mathbf{Conn}(X) \,\longrightarrow\, \mathrm{Vec}(\C)$
that sends any object $(E,\, \nabla^E)$ to the fiber $E_{x_0}$, defines a neutral Tannakian category
over $\C$. By Tannaka duality, we get a pro-algebraic affine group scheme, which we shall denote
by
\begin{equation}\label{vp}
\varpi(X,\, x_0).
\end{equation}
See \cite{Si} for other equivalent description of $\varpi(X,\, x_0)$.

Let $\EC(X)$ be the full subcategory of $\mathbf{Conn}(X)$ consisting of all objects
$(E,\,\nabla^E)$ satisfying the the following condition:
\begin{itemize}
\item The holomorphic vector bundle $E$ admits a filtration of holomorphic subbundles
$$
E\,=\,E_0\,\supseteq\, E_1 \,\supseteq E_2\, \supseteq\,\cdots\, \supseteq\, E_{n}\,
\supseteq\, E_{n+1}\,=\, 0
$$
such that $\nabla^E(E_i) \,\subseteq\, E_i \otimes_{\mathcal{O}_X} \Omega_X^1$ for all 
$1\, \leq\, i\, \leq\, n$,  and the flat holomorphic connection on each quotient
$E_i/E_{i+1}$, $0\, \leq\, i\, \leq\, n$, induced by $\nabla^E$ has finite monodromy group.
\end{itemize}

\begin{lemma}\label{lemma-ext}
Let $0\,\longrightarrow\, (E_1,\, \nabla^{E_1})\,\stackrel{f}{\longrightarrow}\,
(E, \,\nabla^E)\, \stackrel{g}{\longrightarrow}\, (E_2,\, \nabla^{E_2})\,\longrightarrow\, 0$
be an exact sequence in $\mathbf{Conn}(X)$, meaning the underlying sequence of holomorphic 
vector bundles
is exact with $\nabla^E\big\vert_{E_1} \,=\, \nabla^{E_1}$ and $\nabla^{E_2}$ coincides
with the connection on $E_2$ induced by $\nabla^E$.
If $(E_1,\, \nabla^{E_1})$ and $(E_2, \nabla^{E_2})$ are objects of $\mathrm{\EC(X)}$, 
then $(E, \,\nabla^E)$ is also an object of $\EC(X)$.
\end{lemma}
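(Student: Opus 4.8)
The plan is to construct a filtration on $E$ by concatenating the two given filtrations, splicing them together along the subbundle $E_1\,\subseteq\, E$. Since $(E_1,\, \nabla^{E_1})$ is an object of $\EC(X)$, fix a $\nabla^{E_1}$-invariant filtration
$$
E_1\,=\,F_0\,\supseteq\, F_1\,\supseteq\,\cdots\,\supseteq\, F_{m+1}\,=\,0
$$
whose successive quotients carry finite-monodromy connections; similarly fix, for $(E_2,\, \nabla^{E_2})$, a $\nabla^{E_2}$-invariant filtration
$$
E_2\,=\,G_0\,\supseteq\, G_1\,\supseteq\,\cdots\,\supseteq\, G_{k+1}\,=\,0
$$
with the same property.

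First I would transport the filtration of $E_1$ into $E$ using the inclusion $f$, obtaining holomorphic subbundles $f(F_i)\,\subseteq\, E$. Because $\nabla^E\big\vert_{E_1}\,=\,\nabla^{E_1}$, each $f(F_i)$ is preserved by $\nabla^E$, and the induced connection identifies $f(F_i)/f(F_{i+1})$ with $F_i/F_{i+1}$ as objects of $\mathbf{Conn}(X)$, so these quotients retain finite monodromy. Next I would pull back the filtration of $E_2$ along $g$, setting $\widetilde{G}_j\,:=\,g^{-1}(G_j)$. Since $g$ is a surjection of holomorphic vector bundles whose kernel $E_1$ is a subbundle, $g$ is locally a projection, so each $\widetilde{G}_j$ is a holomorphic subbundle of $E$, with $\widetilde{G}_0\,=\,E$ and $\widetilde{G}_{k+1}\,=\,\ker g\,=\,E_1\,=\,f(F_0)$. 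Invariance is the identity worth checking explicitly: for a local holomorphic section $s$ of $\widetilde{G}_j$ one has
$$
(g\otimes \mathrm{Id}_{\Omega_X^1})(\nabla^E s)\,=\,\nabla^{E_2}(g(s))\,\in\, G_j\otimes_{\mathcal{O}_X}\Omega_X^1,
$$
because $G_j$ is $\nabla^{E_2}$-invariant, whence $\nabla^E s\,\in\,\widetilde{G}_j\otimes_{\mathcal{O}_X}\Omega_X^1$. Moreover $g$ induces an isomorphism $\widetilde{G}_j/\widetilde{G}_{j+1}\,\cong\, G_j/G_{j+1}$ intertwining the induced connections, so each of these quotients also has finite monodromy.

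Finally I would splice the two chains at $E_1$: the resulting filtration
$$
E\,=\,\widetilde{G}_0\,\supseteq\,\cdots\,\supseteq\,\widetilde{G}_{k+1}\,=\,E_1\,=\,f(F_0)\,\supseteq\,\cdots\,\supseteq\, f(F_{m+1})\,=\,0
$$
is $\nabla^E$-invariant, and every successive quotient is isomorphic in $\mathbf{Conn}(X)$ either to some $G_j/G_{j+1}$ or to some $F_i/F_{i+1}$, hence has finite monodromy. This exhibits $(E,\,\nabla^E)$ as an object of $\EC(X)$.

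The argument is essentially bookkeeping, so the main points requiring care rather than difficulty are: verifying that the preimages $g^{-1}(G_j)$ are genuine holomorphic subbundles (which follows from $E_1\,=\,\ker g$ being a subbundle, making $g$ locally split), and confirming that the connections induced on the quotients $\widetilde{G}_j/\widetilde{G}_{j+1}$ are carried isomorphically to those on $G_j/G_{j+1}$. Once these two identifications are in place, the finite-monodromy hypotheses transfer verbatim and the concatenation closes up cleanly at the splice $\widetilde{G}_{k+1}\,=\,f(F_0)$.
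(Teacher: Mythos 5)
Your proposal is correct and follows essentially the same route as the paper: push forward the filtration of $E_1$ via $f$, pull back the filtration of $E_2$ via $g$, and concatenate. If anything you are slightly more careful than the paper, since you explicitly include the splice term $\widetilde{G}_{k+1}\,=\,E_1\,=\,f(F_0)$ in the chain (the paper's displayed filtration jumps from $g_0^{-1}(W_n)$ directly to $f_0(V_1)$, which taken literally would make that successive quotient only an extension of finite-monodromy pieces rather than one itself), and you verify the $\nabla^E$-invariance of the preimages and the identification of the quotient connections, which the paper leaves implicit.
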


\begin{proof}
Assume that $(E_1,\, \nabla^{E_1})$ and $(E_2,\, \nabla^{E_2})$ are objects of $\EC(X)$. Let
$$
E_1\,=\,V_0\,\supseteq\, V_1 \, \supseteq\,\cdots\, \supseteq\, V_{m}\,
\supseteq\, V_{m+1}\,=\, 0
$$
be a filtration of holomorphic subbundles satisfying the condition in the definition of
$\EC(X)$. Let
$$
E_2\,=\,W_0\,\supseteq\, W_1 \, \supseteq\,\cdots\, \supseteq\, W_{n}\,
\supseteq\, W_{n+1}\,=\, 0
$$
be a filtration of holomorphic subbundles satisfying the condition in the definition of
$\EC(X)$. Consider the following filtration of holomorphic subbundles of $E$:
$$
E\, \supseteq\, g^{-1}_0(W_1) \, \supseteq\,\cdots\, \supseteq\, g^{-1}_0(W_n)\,
\supseteq\, f_0(V_1) \, \supseteq\,\cdots\, \supseteq\, f_0(V_m) \, \supseteq \, 0.
$$
It clearly satisfies the condition in the definition of $\EC(X)$. Hence, 
$(E, \,\nabla^E)$ is an object of $\EC(X)$.
\end{proof}

\begin{lemma}\label{epi-lemma}
Let $f\,:\, (E,\, \nabla^E) \,\longrightarrow\, (F,\, \nabla^F)$ be an epimorphism in 
$\mathbf{Conn}(X)$. If $(E,\, \nabla^E)$ is an object of $\EC(X)$, then $(F,\, \nabla^F)$ 
is also an object of $\EC(X)$.
\end{lemma}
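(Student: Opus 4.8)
The plan is to push the filtration witnessing $(E,\,\nabla^E)\,\in\,\EC(X)$ forward along the epimorphism, and then to check that finiteness of monodromy is inherited by the quotient connections. Write $f_0\,:\,E\,\longrightarrow\, F$ for the underlying surjective bundle map, and fix a filtration
$$
E\,=\,E_0\,\supseteq\, E_1\,\supseteq\,\cdots\,\supseteq\, E_n\,\supseteq\, E_{n+1}\,=\,0
$$
of holomorphic subbundles preserved by $\nabla^E$ whose successive quotients carry induced connections with finite monodromy. Set $F_i\,:=\,f_0(E_i)$.

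First I would verify that each $F_i$ is a holomorphic subbundle of $F$ preserved by $\nabla^F$. Since $f_0$ intertwines the connections, the restriction $f_0\big\vert_{E_i}$ is a morphism of flat bundles, and a morphism of flat bundles has locally constant rank (in a flat local frame it is constant, being a morphism of the associated local systems); hence its image $F_i$ is a genuine subbundle rather than merely a subsheaf. The computation $\nabla^F(F_i)\,=\,(f_0\otimes\mathrm{Id}_{\Omega_X^1})(\nabla^E(E_i))\,\subseteq\, F_i\otimes_{\struct{X}}\Omega_X^1$ shows that $F_i$ is $\nabla^F$-invariant. Surjectivity of $f_0$ gives $F_0\,=\,F$, and clearly $F_{n+1}\,=\,0$, so $\{F_i\}$ is a $\nabla^F$-preserved filtration of $F$ (some successive terms may coincide, which causes no difficulty).

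Next I would treat the graded pieces. For each $i$, the map $f_0$ carries $E_i$ onto $F_i$ and $E_{i+1}$ onto $F_{i+1}$, so it descends to a \emph{surjective} morphism $\overline{f}_i\,:\,E_i/E_{i+1}\,\longrightarrow\, F_i/F_{i+1}$ in $\mathbf{Conn}(X)$ intertwining the induced connections. Passing to monodromy representations, this exhibits $F_i/F_{i+1}$ as a quotient local system of $E_i/E_{i+1}$. By hypothesis the monodromy representation of $E_i/E_{i+1}$ has finite image $G$; since the kernel of $\overline{f}_i$ is a subrepresentation, it is $G$-invariant, so $G$ acts on the quotient, and the monodromy of $F_i/F_{i+1}$ is precisely the image of $G$ under this induced action, which is again finite. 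Thus every successive quotient of $\{F_i\}$ has finite monodromy, and $(F,\,\nabla^F)$ is an object of $\EC(X)$.

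The essentially routine but load-bearing step is the first one: that $f_0(E_i)$ is a genuine subbundle and that the induced maps on graded pieces are well-defined surjections in $\mathbf{Conn}(X)$. This rests on the standard fact that morphisms in the Tannakian category $\mathbf{Conn}(X)$ have strict images, because flat bundles correspond to local systems under Riemann--Hilbert. Once this is in hand, the conceptual heart --- that a quotient of a finite-monodromy connection again has finite monodromy --- is immediate, since a quotient representation of a finite group has finite image.
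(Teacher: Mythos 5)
Your proposal is correct and follows essentially the same route as the paper's own proof: push the filtration forward along $f_0$ and observe that each graded piece of the image filtration is a quotient of a finite-monodromy flat bundle, hence again has finite monodromy. You simply make explicit two points the paper leaves implicit --- that $f_0(E_i)$ is a genuine subbundle because flat-bundle morphisms have constant rank, and that a quotient representation of a finite group has finite image.
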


\begin{proof}
Since $(E,\, \nabla^E)$ is an object of $\EC(X)$, there exists a filtration
\begin{equation}\label{eq:1}
E\,=\,E_0\, \supseteq\, E_1\, \supseteq\, E_2\, \supseteq\, \cdots\,\supseteq\, E_n\,\supseteq\, 
E_{n+1} \,=\, 0
\end{equation}
with $\nabla^E(E_i)\, \subseteq\, E_i \otimes_{\mathcal{O}_X} \Omega_X^1$ for all $1\, \leq\, i\,
\leq\, n$, and the flat holomorphic connection on each quotient
$E_i/E_{i+1}$, $0\, \leq\, i\, \leq\, n$, induced by $\nabla^E$ has finite monodromy group.
Note that $\ker(f_0)$ equipped with the flat holomorphic connection $\nabla^{\ker(f_0)}$ 
induced by $\nabla^E$ is an object of $\mathbf{Conn}(X)$. Since $f$ is an epimorphism in 
$\mathbf{Conn}(X)$, the connection $\nabla^F$ coincides with the connection on 
$E/\ker(f_0)\,\simeq\, F$ induced by $\nabla^E$. Therefore, the filtration \eqref{eq:1} 
yields a filtration
$$
F\,=\,f_0(E_0)\, \supseteq\, f_0(E_1)\, \supseteq\, f_0(E_2)\, \supseteq\, \cdots\,\supseteq\,
f_0(E_m)\,\supseteq\, f_0(E_{m+1}) \,=\, 0
$$
with $\nabla^F(f_0(E_i))\, \subseteq\, f_0(E_i) \otimes_{\mathcal{O}_X} \Omega_X^1$, and the flat 
holomorphic connection on each quotient $f_0(E_i)/f_0(E_{i+1})$, $0\, 
\leq\, i\, \leq\, m$, induced by $\nabla^F$ has finite monodromy group.
\end{proof}

\begin{lemma}\label{lem-t}
For any object $(E,\, \nabla^E)$ of $\EC(X)$, the dual vector bundle $E^*$ equipped with the
connection induced by $\nabla^E$ is also an object of $\EC(X)$.
\end{lemma}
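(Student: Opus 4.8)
The plan is to produce a witnessing filtration for $(E^*,\, \nabla^{E^*})$ by dualizing, via annihilators, a filtration that witnesses membership of $(E,\, \nabla^E)$ in $\EC(X)$. So fix a filtration
$$E\,=\,E_0\,\supseteq\, E_1\,\supseteq\,\cdots\,\supseteq\, E_n\,\supseteq\, E_{n+1}\,=\,0$$
with $\nabla^E(E_i)\,\subseteq\, E_i\otimes_{\mathcal{O}_X}\Omega_X^1$ and each successive quotient $E_i/E_{i+1}$ carrying finite monodromy. For each $i$ let $E_i^{\perp}\,\subseteq\, E^*$ be the annihilator subbundle of local sections of $E^*$ vanishing on $E_i$; since $E_i$ is a holomorphic subbundle, $E_i^{\perp}\,\cong\,(E/E_i)^*$ is again a holomorphic subbundle. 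Setting $F_j\,:=\,E_{n+1-j}^{\perp}$ gives a decreasing filtration $E^*\,=\,F_0\,\supseteq\, F_1\,\supseteq\,\cdots\,\supseteq\, F_{n+1}\,=\,0$ of $E^*$.

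First I would check that each $E_i^{\perp}$ is preserved by $\nabla^{E^*}$. Using the defining formula $(\nabla^{E^*}\sigma)(s)\,=\, d(\sigma(s))-\sigma(\nabla^E s)$, for a local section $\sigma$ of $E_i^{\perp}$ and $s$ of $E_i$ one has $\sigma(s)\,=\,0$, hence $d(\sigma(s))\,=\,0$, while $\nabla^E s\,\in\, E_i\otimes_{\mathcal{O}_X}\Omega_X^1$ forces $\sigma(\nabla^E s)\,=\,0$. Thus $\nabla^{E^*}\sigma$ annihilates $E_i$, so $\nabla^{E^*}(E_i^{\perp})\,\subseteq\, E_i^{\perp}\otimes_{\mathcal{O}_X}\Omega_X^1$, and in particular each $F_j$ is $\nabla^{E^*}$-invariant.

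Next I would identify the successive quotients. Dualizing the exact sequence $0\,\to\, E_i/E_{i+1}\,\to\, E/E_{i+1}\,\to\, E/E_i\,\to\, 0$ yields $0\,\to\, E_i^{\perp}\,\to\, E_{i+1}^{\perp}\,\to\, (E_i/E_{i+1})^*\,\to\, 0$, so that $F_j/F_{j+1}\,\cong\,(E_{n-j}/E_{n-j+1})^*$ and, by functoriality of the dual connection, this identification is an isomorphism in $\mathbf{Conn}(X)$ carrying the induced connection on $F_j/F_{j+1}$ to the dual of the connection on $E_{n-j}/E_{n-j+1}$. Since the monodromy of a dual connection is the contragredient of the original monodromy, and $g\,\mapsto\,(g^*)^{-1}$ is a group isomorphism $\mathrm{GL}(V)\,\to\,\mathrm{GL}(V^*)$, finiteness of the monodromy image is preserved. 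As every $E_{n-j}/E_{n-j+1}$ has finite monodromy by hypothesis, so does each $F_j/F_{j+1}$, and therefore $\{F_j\}$ exhibits $(E^*,\, \nabla^{E^*})$ as an object of $\EC(X)$.

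The steps here are essentially formal, and the only points demanding genuine care are verifying that the quotient-identification is a morphism in $\mathbf{Conn}(X)$ and not merely of underlying bundles, which follows from the rigidity of $\mathbf{Conn}(X)$ and functoriality of the dual connection, together with the elementary observation that contragredience preserves finiteness of the monodromy group. I do not expect any substantial obstacle.
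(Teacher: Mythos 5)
Your proof is correct and follows essentially the same route as the paper: the paper's ``dual filtration given by the surjections $E^*\to E_i^*$'' is precisely your annihilator filtration $F_j=E_{n+1-j}^{\perp}$ (the kernels of those surjections), and both arguments conclude by noting that the contragredient of a finite monodromy representation has finite image. You simply make explicit the verifications (invariance of $E_i^{\perp}$ under $\nabla^{E^*}$ and the identification of successive quotients) that the paper leaves implicit.
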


\begin{proof}
Let 
$$
E\,=\,E_0\, \supseteq\, E_1\, \supseteq\, E_2\, \supseteq\, \cdots\,\supseteq\, E_n\,\supseteq\, 
E_{n+1} \,=\, 0
$$
be a filtration of holomorphic subbundles such that $\nabla^E(E_i)\, \subseteq\, E_i
\otimes_{\mathcal{O}_X} \Omega_X^1$ for all $1\, \leq\, i\,
\leq\, n$, and the flat holomorphic connection on each quotient
$E_i/E_{i+1}$, $0\, \leq\, i\, \leq\, n$, induced by $\nabla^E$ has finite monodromy group.
Consider the dual filtration of $E^*$ given by the following surjections:
$$
E^*\,=\,E^*_0\, \longrightarrow\, E^*_1\, \longrightarrow\, \cdots\,\longrightarrow \, E^*_n
\,\longrightarrow\, E^*_{n+1} \,=\, 0.
$$
It is preserved by the connection on $E^*$ induced by $\nabla^E$. The monodromy of
a dual connection is the dual representation of the fundamental group. Hence, $E^*$ equipped 
with the connection induced by $\nabla^E$ is an object of $\EC(X)$.
\end{proof}

\begin{corollary}\label{cor1}
Let $f\,:\, (E,\, \nabla^E) \,\longrightarrow\, (F,\, \nabla^F)$ be an epimorphism in 
$\EC(X)$. Then $\ker(f_0)$ equipped with the connection induced by $\nabla^E$ is
also an object of $\EC(X)$.
\end{corollary}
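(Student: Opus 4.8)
The plan is to reduce the claim about the kernel to the facts already established for quotients and duals, by passing through the duality functor on $\mathbf{Conn}(X)$. First I would record that the epimorphism $f$ fits into a short exact sequence
$$
0\,\longrightarrow\,(K,\,\nabla^K)\,\stackrel{\iota}{\longrightarrow}\,(E,\,\nabla^E)\,\stackrel{f}{\longrightarrow}\,(F,\,\nabla^F)\,\longrightarrow\,0
$$
in $\mathbf{Conn}(X)$, where $K\,=\,\ker(f_0)$ carries the flat holomorphic connection $\nabla^K$ induced by $\nabla^E$; that $(K,\,\nabla^K)$ is an object of $\mathbf{Conn}(X)$ was already observed in the proof of Lemma \ref{epi-lemma}. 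Since the underlying bundles are locally free, dualizing gives an exact sequence
$$
0\,\longrightarrow\,(F^*,\,\nabla^{F^*})\,\stackrel{f^*}{\longrightarrow}\,(E^*,\,\nabla^{E^*})\,\stackrel{\iota^*}{\longrightarrow}\,(K^*,\,\nabla^{K^*})\,\longrightarrow\,0
$$
in $\mathbf{Conn}(X)$ with the dual connections; in particular $\iota^*$ is an epimorphism.

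Now I would apply the earlier lemmas in succession. By Lemma \ref{lem-t}, the dual $(E^*,\,\nabla^{E^*})$ lies in $\EC(X)$ because $(E,\,\nabla^E)$ does. Applying Lemma \ref{epi-lemma} to the epimorphism $\iota^*$ then shows $(K^*,\,\nabla^{K^*})\,\in\,\EC(X)$, and a second application of Lemma \ref{lem-t} shows that the double dual $((K^*)^*,\,\nabla^{(K^*)^*})$ is an object of $\EC(X)$.

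It remains to identify this double dual with $(K,\,\nabla^K)$. The point to verify is that the canonical evaluation isomorphism $K\,\cong\,(K^*)^*$ of holomorphic vector bundles intertwines $\nabla^K$ with $\nabla^{(K^*)^*}$, so that it is an isomorphism in $\mathbf{Conn}(X)$; this follows directly from the definition of the dual connection, since applying it twice returns the original connection under the evaluation map. Granting this, $(K,\,\nabla^K)$ is isomorphic to an object of $\EC(X)$ and hence is itself an object of $\EC(X)$. I expect this compatibility under double dualization to be the only step requiring genuine verification, the remainder being a formal application of Lemmas \ref{epi-lemma} and \ref{lem-t}.
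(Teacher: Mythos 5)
Your proposal is correct and follows essentially the same route as the paper: dualize the inclusion $\ker(f_0)\hookrightarrow E$ to get an epimorphism $E^*\longrightarrow \ker(f_0)^*$, apply Lemma \ref{lem-t} and Lemma \ref{epi-lemma} to conclude $\ker(f_0)^*$ is in $\EC(X)$, then dualize again. The only difference is that you make explicit the double-dual identification $(K^*)^*\cong K$ as flat bundles, which the paper leaves implicit.
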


\begin{proof}
Consider the dual $E^*\, \longrightarrow\, \ker (f_0)^*$ of the inclusion map
$\ker(f_0) \, \hookrightarrow\, E$. The connection of $E^*$ induced by $\nabla^E$ produces a
connection of $\ker(f_0)^*$. Indeed, this follows immediately from the fact that the epimorphism
$f_0$ is connection preserving. The vector bundle $E^*$ equipped with the connection induced by
$\nabla^E$ is an object of $\EC(X)$ (see Lemma \ref{lem-t}). Therefore, from Lemma \ref{epi-lemma},
it follows that $\ker(f_0)^*$ equipped with the above connection is an object of $\EC(X)$. 
Hence, from Lemma \ref{lem-t}, it follows that $\ker(f_0)$ equipped with the connection induced 
by $\nabla^E$ is an object of $\EC(X)$.
\end{proof}

\begin{proposition}\label{prop-EC-rigid}
The full subcategory $\EC(X)$ of $\mathbf{Conn}(X)$ is a $\C$-linear rigid tensor subcategory. 
\end{proposition}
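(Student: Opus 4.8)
The plan is to use the fact, already recorded in the excerpt, that $\mathbf{Conn}(X)$ is an abelian $\C$-linear rigid tensor category. Since $\EC(X)$ is a \emph{full} subcategory, the $\C$-linear and abelian structures on the $\operatorname{Hom}$-sets are inherited verbatim, so the whole content of the proposition is to check that $\EC(X)$ is closed under the tensor operations and that the duality data restrict. Concretely, I would verify that: (i) the unit object $(\struct{X},\, d)$ lies in $\EC(X)$; (ii) $\EC(X)$ is closed under direct sums; (iii) it is closed under tensor products; and (iv) it is closed under passing to duals. Item (i) is immediate, since $(\struct{X},\, d)$ has trivial, hence finite, monodromy (take the two-step filtration $\struct{X}\,\supseteq\,0$). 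Item (ii) follows from Lemma \ref{lemma-ext} applied to the split exact sequence $0\,\to\,(E,\nabla^E)\,\to\,(E\oplus F,\nabla^{E\oplus F})\,\to\,(F,\nabla^F)\,\to\,0$ in $\mathbf{Conn}(X)$. Item (iv) is exactly Lemma \ref{lem-t}.

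The only nontrivial step is closure under tensor products, and the first thing I would isolate is the underlying representation-theoretic fact: if $(V_1,\nabla_1)$ and $(V_2,\nabla_2)$ are flat holomorphic connections with finite monodromy, then $\nabla^{V_1\otimes V_2}$ also has finite monodromy. Writing $\rho_i\colon \pi_1(X,x_0)\,\to\,\mathrm{GL}((V_i)_{x_0})$ for the monodromy representations, the monodromy of the tensor product is $\rho_1\otimes\rho_2$, whose image lies in the finite set $\{A\otimes B : A\in\mathrm{Im}(\rho_1),\, B\in\mathrm{Im}(\rho_2)\}$ and is therefore finite.

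Next, given objects $(E,\nabla^E)$ and $(F,\nabla^F)$ of $\EC(X)$ with filtrations $E=E_0\supseteq\cdots\supseteq E_{n+1}=0$ and $F=F_0\supseteq\cdots\supseteq F_{m+1}=0$ as in the definition, I would tensor the filtration of $E$ by $F$ to obtain $E\otimes F=E_0\otimes F\supseteq E_1\otimes F\supseteq\cdots\supseteq E_{n+1}\otimes F=0$. Each $E_i\otimes F$ is a holomorphic subbundle preserved by $\nabla^{E\otimes F}$, as one reads off directly from the Leibniz rule $\nabla^{E\otimes F}(s\otimes t)=\nabla^E(s)\otimes t+s\otimes\nabla^F(t)$ together with $\nabla^E(E_i)\subseteq E_i\otimes\Omega^1_X$; and the successive quotient is $(E_i/E_{i+1})\otimes F$, where $Q_i:=E_i/E_{i+1}$ carries a finite-monodromy connection. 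Tensoring $Q_i$ with the filtration of $F$, whose graded pieces $F_j/F_{j+1}$ also have finite monodromy, and invoking the fact isolated above, every graded piece $Q_i\otimes(F_j/F_{j+1})$ has finite monodromy. Hence each $Q_i\otimes F$ lies in $\EC(X)$, and then $(E\otimes F,\nabla^{E\otimes F})$ lies in $\EC(X)$ by an inductive application of Lemma \ref{lemma-ext} to the quotients $Q_i\otimes F$ (equivalently, by reading off the combined bifiltration, whose graded pieces are the $Q_i\otimes(F_j/F_{j+1})$).

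Finally, rigidity follows formally: since $\mathbf{Conn}(X)$ is rigid and $\EC(X)$ is a full subcategory containing the unit and closed under tensor products and duals, for every object $V$ of $\EC(X)$ the dual $V^*$ computed in $\mathbf{Conn}(X)$ lies in $\EC(X)$ by Lemma \ref{lem-t}, while the evaluation and coevaluation morphisms are morphisms of $\mathbf{Conn}(X)$ between objects of $\EC(X)$, hence are morphisms of the full subcategory $\EC(X)$; thus the duality data restrict and $\EC(X)$ is a $\C$-linear rigid tensor subcategory. The hardest part will be the tensor-product step — specifically, confirming that the tensor-product filtration is genuinely a filtration by connection-preserving holomorphic subbundles with the expected graded pieces, and that finiteness of monodromy passes to tensor products.
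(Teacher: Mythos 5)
Your proposal is correct and follows essentially the same route as the paper: tensor the filtration of $E$ with $F$, identify the graded pieces as $(E_i/E_{i+1})\otimes F$, and conclude by closure under extensions (Lemma \ref{lemma-ext}) together with Lemma \ref{lem-t} for duals. You are in fact somewhat more careful than the paper, which leaves implicit the step you isolate explicitly --- that each graded piece $(E_i/E_{i+1})\otimes(F_j/F_{j+1})$ has finite monodromy because the tensor product of two finite-image representations has finite image --- and which does not spell out the unit object, direct sums, or the restriction of the evaluation/coevaluation data.
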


\begin{proof}
We will show that $\EC(X)$ is a tensor category. Take two objects $(E,\, \nabla^E)$ and
$(F,\, \nabla^F)$ in $\EC(X)$. We need to show that $E \otimes F$ together with the induced 
connection $\nabla^{E\otimes F})$ is an object of $\EC(X)$. 
Since $(E,\, \nabla^E)$ is an object in $\EC(X)$, there exist a filtration
\begin{equation}\label{eq:2}
E\,=\,E_0\,\supseteq\, E_1\, \supseteq\, E_2\, \supseteq\, \cdots\, \supseteq\, E_n\,\supseteq\, 
E_{n+1}\, =\, 0
\end{equation}
with $\nabla^E(E_i)\, \subseteq\, E_i \otimes_{\mathcal{O}_X} \Omega_X^1$, and the flat holomorphic
connection on each quotient $E_i/E_{i+1}$, $0\, \leq\, i\, \leq\, n$, induced by $\nabla^E$ 
has finite monodromy group. Tensoring \eqref{eq:2} with $F$, we get a filtration 
$$
E\otimes F\,=\,E_0\otimes F\,\supseteq\, E_1\otimes F\, \supseteq\, \cdots\, \supseteq\,
E_n\otimes F\,\supseteq\, E_{n+1}\otimes F\, =\, 0
$$ 
with $(E_i \otimes F)/(E_{i+1} \otimes F)\, \simeq\, (E_i/E_{i+1})\otimes F$ and it is compatible 
with the induced connection $\nabla^{E\otimes F}$. By Lemma \ref{lemma-ext}, the 
category $\EC(X)$ is closed under taking extension in $\mathbf{Conn}(X)$. Consequently, it follows 
that $(E \otimes F,\, \nabla^{E\otimes F})$ is an object of $\EC(X)$. 

In Lemma \ref{lem-t}, it was shown that $\EC(X)$ is closed under taking duals. This completes the proof.
\end{proof}

\begin{proposition}\label{prop-EC-abelian}
The full subcategory $\EC(X)$ of $\mathbf{Conn}(X)$ is an abelian subcategory.
\end{proposition}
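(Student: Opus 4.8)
The plan is to invoke the standard criterion that a full subcategory of an abelian category is itself abelian, with the inclusion functor exact, as soon as it contains a zero object, is closed under finite direct sums, and is closed under the formation of kernels and cokernels computed in the ambient category. Since $\mathbf{Conn}(X)$ is already abelian, for every morphism $f\colon (E,\,\nabla^E)\,\longrightarrow\,(F,\,\nabla^F)$ the kernel, image and cokernel exist as objects of $\mathbf{Conn}(X)$; so it suffices to check that these constructions remain inside $\EC(X)$ when $f$ is a morphism between objects of $\EC(X)$.

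First I would settle the additive structure. The zero object lies in $\EC(X)$ via the trivial filtration, and $\EC(X)$ is closed under finite direct sums: a direct sum fits into a split short exact sequence, so this is immediate from Lemma \ref{lemma-ext}.

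The substance of the argument is the image factorization in $\mathbf{Conn}(X)$. Given a morphism $f\colon (E,\,\nabla^E)\,\longrightarrow\,(F,\,\nabla^F)$ in $\EC(X)$, factor it as $E\,\twoheadrightarrow\,\operatorname{im}(f)\,\hookrightarrow\,F$, where $\operatorname{im}(f)$ carries the connection induced by $\nabla^E$ (equivalently, the restriction of $\nabla^F$). Because $E$ is an object of $\EC(X)$ and $E\,\twoheadrightarrow\,\operatorname{im}(f)$ is an epimorphism in $\mathbf{Conn}(X)$, Lemma \ref{epi-lemma} places $\operatorname{im}(f)$ in $\EC(X)$. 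Now $E\,\twoheadrightarrow\,\operatorname{im}(f)$ is an epimorphism between objects of $\EC(X)$, so Corollary \ref{cor1} shows that $\ker(f_0)\,=\,\ker(f)$, equipped with its induced connection, is an object of $\EC(X)$. Dually, $F\,\twoheadrightarrow\,\coker(f)\,=\,F/\operatorname{im}(f)$ is an epimorphism out of $F\,\in\,\EC(X)$, so Lemma \ref{epi-lemma} again gives $\coker(f)\,\in\,\EC(X)$.

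Once kernels and cokernels of morphisms of $\EC(X)$ lie again in $\EC(X)$, they represent the kernels and cokernels formed inside $\EC(X)$, and the canonical comparison morphism from the coimage to the image stays an isomorphism because it is one in $\mathbf{Conn}(X)$; hence $\EC(X)$ is abelian and the inclusion into $\mathbf{Conn}(X)$ is exact. I do not anticipate a serious obstacle. The only point needing care is that $\ker(f)$, $\operatorname{im}(f)$ and $\coker(f)$ are genuine objects of $\mathbf{Conn}(X)$, that is, the underlying sheaf images are holomorphic subbundles preserved by the connection; this is precisely the content of $\mathbf{Conn}(X)$ being abelian, and can be seen concretely through the Riemann--Hilbert correspondence, under which a morphism of flat bundles becomes an intertwiner of $\pi_1(X,\,x_0)$-representations whose kernel and image are automatically subrepresentations.
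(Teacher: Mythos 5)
Your proposal follows essentially the same route as the paper: factor $f$ through its image, apply Lemma \ref{epi-lemma} to place $\operatorname{Im}(f_0)$ and $\coker(f_0)$ in $\EC(X)$, and then apply Corollary \ref{cor1} to the epimorphism $E\twoheadrightarrow \operatorname{Im}(f_0)$ to get the kernel. The extra remarks on the zero object, direct sums, and the coimage--image comparison are correct and only make explicit what the paper leaves implicit.
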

\begin{proof}
Let $f\,:\, (E,\, \nabla^E) \longrightarrow (F,\, \nabla^F)$ be a morphism in $\EC(X)$. 
Then, the image of $f_0$, denoted by $\mathrm{Im}(f_0)$, with the flat holomorphic connection 
$\nabla^{\mathrm{Im}(f_0)}$ induced by $\nabla^F$ is a subobject of $(F,\, \nabla^F)$ in 
$\mathbf{Conn}(X)$. Hence, we have an epimorphism 
\begin{equation}\label{eq-epi}
f\,:\, (E,\, \nabla^E) \longrightarrow (\mathrm{Im}(f_0),\, \nabla^{\mathrm{Im}(f_0)})
\end{equation}
in $\mathbf{Conn}(X)$. By Lemma \ref{epi-lemma}, it follows that 
$(\mathrm{Im}(f_0),\, \nabla^{\mathrm{Im}(f_0)})$ is an object of $\EC(X)$. Again from 
Lemma \ref{epi-lemma}, we can conclude that $\coker(f_0)$ equipped with the connection 
induced by $\nabla^F$ is an object of $\EC(X)$. 

Since $(\mathrm{Im}(f_0),\, \nabla^{\mathrm{Im}(f_0)})$ is an object of $\EC(X)$, 
the morphism in \eqref{eq-epi} is an epimorphism in $\EC(X)$. Hence, using Corollary 
\ref{cor1}, it follows that $\ker(f_0)$ equipped with the connection induced by $\nabla^E$ 
is an object of $\EC(X)$.
\end{proof}

\begin{theorem}\label{thm-fgsk}
The category $\EC(X)$, equipped with the faithful fiber functor $T_{x_0}$ that sends any object 
$(E,\, \nabla^E)$ to the fiber $E_{x_0}$, defines a neutral Tannakian category
over $\C$. Denoting by $\varpi^{\mathrm{EC}}(X,\, x_0)$ the affine group scheme corresponding 
to the neutral Tannakian category $\EC(X)$, there is a natural faithfully flat homomorphism
$$
\varpi(X,\, x_0)\ \longrightarrow\ \varpi^{\mathrm{EC}}(X,\, x_0)
$$
of affine group schemes over $\C$ (see \eqref{vp} for $\varpi(X,\, x_0)$).
\end{theorem}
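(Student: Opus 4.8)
The plan is to split the theorem into two parts: first, verifying that $(\EC(X),\, T_{x_0})$ is a neutral Tannakian category over $\C$, and second, constructing the faithfully flat homomorphism. The first part mostly assembles the structural results already established, while the genuine content lies in the second part, for which I would appeal to the standard criterion of Deligne and Milne characterizing faithfully flat morphisms of Tannakian fundamental groups \cite{DM}.

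For the Tannakian claim, Proposition \ref{prop-EC-abelian} gives that $\EC(X)$ is an abelian subcategory of $\mathbf{Conn}(X)$, and Proposition \ref{prop-EC-rigid} gives that it is a $\C$-linear rigid tensor subcategory. The unit object is $(\struct{X},\, d)$, which lies in $\EC(X)$ since its monodromy is trivial; moreover, as $X$ is compact and connected, its flat global endomorphisms are the constants, so $\operatorname{End}(\struct{X},\, d)\,=\,\C$. It then remains to observe that the restriction of $T_{x_0}$ to $\EC(X)$ is a faithful, exact, $\C$-linear tensor functor into the category of finite-dimensional $\C$-vector spaces: faithfulness and exactness are inherited from $\mathbf{Conn}(X)$ because $\EC(X)$ is a full abelian subcategory with exact inclusion, the tensor property is the canonical identification $(E\otimes F)_{x_0}\,=\,E_{x_0}\otimes F_{x_0}$, and finite-dimensionality holds because each $E_{x_0}$ has dimension the rank of $E$. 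Tannaka duality then yields the affine group scheme $\varpi^{\mathrm{EC}}(X,\, x_0)$ together with an equivalence $\EC(X)\,\simeq\,\operatorname{Rep}_\C(\varpi^{\mathrm{EC}}(X,\, x_0))$.

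For the homomorphism, the inclusion $\iota\,:\,\EC(X)\,\hookrightarrow\,\mathbf{Conn}(X)$ is a fully faithful exact $\C$-linear tensor functor compatible with the two fiber functors, so by the functoriality of Tannaka duality it corresponds to a homomorphism $\varpi(X,\, x_0)\,\longrightarrow\,\varpi^{\mathrm{EC}}(X,\, x_0)$ of affine group schemes over $\C$. By the Deligne--Milne criterion \cite[Proposition~2.21(a)]{DM}, this homomorphism is faithfully flat if and only if $\iota$ is fully faithful, which is clear since $\EC(X)$ is a full subcategory, and every subobject in $\mathbf{Conn}(X)$ of an object of $\EC(X)$ again lies in $\EC(X)$. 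Thus the crux reduces to showing that $\EC(X)$ is closed under taking subobjects inside $\mathbf{Conn}(X)$.

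This closure is where I expect the main (albeit short) argument, and I would obtain it by dualizing rather than by intersecting filtrations directly, since the naive intersection $E'\cap E_i$ of a $\nabla$-invariant subbundle $E'$ with the terms of an $\EC$-filtration of $E$ need not be a subbundle. Given $(E,\, \nabla^E)\,\in\,\EC(X)$ and a subobject $(E',\, \nabla^{E'})$ of it in $\mathbf{Conn}(X)$, the dual of the inclusion $E'\hookrightarrow E$ is an epimorphism $(E^*,\,\nabla^{E^*})\,\longrightarrow\,((E')^*,\,\nabla^{(E')^*})$ in $\mathbf{Conn}(X)$. By Lemma \ref{lem-t} the object $(E^*,\,\nabla^{E^*})$ lies in $\EC(X)$; by Lemma \ref{epi-lemma} so does its quotient $((E')^*,\,\nabla^{(E')^*})$; and applying Lemma \ref{lem-t} once more shows that $(E',\, \nabla^{E'})\,\cong\,((E')^*)^*$ lies in $\EC(X)$. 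This verifies the closure hypothesis, so the homomorphism is faithfully flat, completing the plan.
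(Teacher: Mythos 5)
Your proof is correct and takes essentially the same route as the paper, whose own proof simply cites Propositions \ref{prop-EC-rigid} and \ref{prop-EC-abelian} together with \cite[Proposition 2.21]{DM}. The only thing you add is the explicit verification that $\EC(X)$ is closed under subobjects in $\mathbf{Conn}(X)$ via dualization --- the same trick the paper uses in Corollary \ref{cor1} and leaves implicit in the proof of this theorem.
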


\begin{proof}
The theorem follows from Proposition \ref{prop-EC-rigid}, Proposition \ref{prop-EC-abelian}
and \cite[Proposition 2.21]{DM}.
\end{proof}

Let $\xi \,:\, \EC(X) \,\longrightarrow\, \mathcal{C}^{\mathrm{EN}}(X)$ be the forgetful functor
that sends any pair $(E,\, \nabla^E)$ to $E$. Then, we get an induced morphism 
\begin{equation}\label{eq:3}
\xi^* \ :\ \pi^{\mathrm{EN}}(X,\,x_0) \ \longrightarrow\ \varpi^{\mathrm{EC}}(X,\, x_0)
\end{equation}
of affine group scheme over $\C$.

\begin{theorem}\label{main-thm}
The morphism $\xi^*$ in \eqref{eq:3} of affine group schemes over $\C$ is a closed immersion.
\end{theorem}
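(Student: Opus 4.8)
The plan is to use the Tannaka dictionary to convert the statement into a property of the functor $\xi$, and then to verify that property by a Hodge-theoretic extension argument on the compact K\"ahler manifold $X$. First, I would record that $\xi\,:\,\EC(X)\,\ra\,\mathcal{C}^{\mathrm{EN}}(X)$ is a faithful, exact, $\C$-linear tensor functor: it is a tensor functor because it forgets the connection and the operations on $\EC(X)$ are induced from those on the underlying bundles; it is faithful because a morphism in $\EC(X)$ is in particular a morphism of the underlying bundles; and it is exact because kernels and cokernels in both categories are computed on underlying bundles (as in Proposition \ref{prop-EC-abelian}). Since both neutral fiber functors are given by taking the fiber at $x_0$, the functor $\xi$ is compatible with them, and hence, up to natural isomorphism, $\xi$ is precisely the restriction functor $\mathrm{Rep}(\varpi^{\mathrm{EC}}(X,\,x_0))\,\ra\,\mathrm{Rep}(\pi^{\mathrm{EN}}(X,\,x_0))$ attached to $\xi^*$. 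By \cite[Proposition 2.21]{DM}, the homomorphism $\xi^*$ is a closed immersion if and only if every object of $\mathcal{C}^{\mathrm{EN}}(X)$ is isomorphic to a subquotient of $\xi(E,\,\nabla^E)$ for some object $(E,\,\nabla^E)$ of $\EC(X)$. It therefore suffices to prove the slightly stronger statement that $\xi$ is essentially surjective, i.e. that every semi-finite bundle underlies an object of $\EC(X)$.

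Second, I would set up an induction on the length of a semi-finite filtration $E\,=\,E_0\,\supset\,E_1\,\supset\,\cdots\,\supset\,E_{n+1}\,=\,0$ with finite quotients $Q_i\,=\,E_i/E_{i+1}$. When $n=0$, the bundle $E=Q_0$ is finite, so by \cite{No} (see also \cite{BHS}) it carries a flat holomorphic connection $\nabla^E$ with finite monodromy; with the trivial filtration $(E,\,\nabla^E)$ is an object of $\EC(X)$ with $\xi(E,\,\nabla^E)=E$. For the inductive step I write $0\,\ra\,E_1\,\ra\,E\,\ra\,Q_0\,\ra\,0$, where $Q_0$ is finite and, by the inductive hypothesis, $E_1$ carries an $\EC(X)$-connection $\nabla^{E_1}$. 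Fixing a finite-monodromy flat connection $\nabla^{Q_0}$ on $Q_0$, it remains to produce a flat holomorphic connection $\nabla^E$ on $E$ that preserves $E_1$, restricts to $\nabla^{E_1}$ there, and induces $\nabla^{Q_0}$ on $Q_0$: concatenating the $\EC(X)$-filtration of $E_1$ with $E$ then exhibits $(E,\,\nabla^E)$ as an object of $\EC(X)$, as desired.

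Third, and this is the heart of the matter, the existence of such a $\nabla^E$ is an extension problem. Setting $W\,=\,\mathcal{H}om(Q_0,\,E_1)$, equipped with the flat connection induced by $\nabla^{Q_0}$ and $\nabla^{E_1}$, the flat extensions of $(Q_0,\,\nabla^{Q_0})$ by $(E_1,\,\nabla^{E_1})$ in $\mathbf{Conn}(X)$ are classified by $H^1_{\mathrm{dR}}(X,\,W)$, while the holomorphic extensions are classified by $H^1(X,\,W)$; the prescribed $\nabla^E$ exists precisely when the class of the holomorphic extension $E$ lies in the image of the forgetful map $H^1_{\mathrm{dR}}(X,\,W)\,\ra\,H^1(X,\,W)$. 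I expect the main obstacle to be the surjectivity of this map. The flat bundle $W$ is an iterated extension of finite- (hence unitary-) monodromy flat bundles, so its graded pieces are polarizable variations of Hodge structure of weight $0$. For a unitary flat bundle on a compact K\"ahler manifold the K\"ahler identities force every harmonic $(0,1)$-form with coefficients in $W$ to be annihilated by the full flat connection, which yields the surjectivity directly; the general finite-by-unipotent case then follows from degeneration at the first page of the Hodge--de Rham spectral sequence of the underlying variation of mixed Hodge structure, using the results of \cite{DPS} and \cite{Si} on compact K\"ahler $X$. This is the only step that uses the K\"ahler hypothesis in an essential way, which is consistent with the failure of the conclusion for non-K\"ahler $X$ discussed later in the paper.

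Granting this surjectivity, the induction produces the required connection $\nabla^E$ at each stage, establishing the essential surjectivity of $\xi$, and hence, via \cite[Proposition 2.21]{DM}, that $\xi^*$ is a closed immersion. The delicate point throughout is exactly the non-unitary (unipotent) instance of the $H^1_{\mathrm{dR}}\,\ra\,H^1$ comparison; everything else is formal Tannakian bookkeeping together with the K\"ahler case of finite bundles already recorded in the excerpt.
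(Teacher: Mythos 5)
Your overall strategy coincides with the paper's: both reduce, via \cite[Proposition 2.21]{DM}, to showing that every semi-finite bundle $E$ underlies an object of $\EC(X)$, i.e.\ admits a flat holomorphic connection preserving a semi-finite filtration and inducing finite-monodromy connections on the graded pieces. The difference is in how that connection is produced. The paper invokes \cite[Corollary 3.10 and the following remark]{Si} as a black box: since each $E_i/E_{i+1}$ is polystable with vanishing Chern classes, the filtered bundle $E$ (viewed as a semistable Higgs bundle with zero Higgs field and vanishing Chern classes, built as an iterated extension of such) acquires a flat connection with the required compatibilities in one stroke. You instead try to reprove this by induction on the length of the filtration, reducing each step to the surjectivity of the edge map $H^1_{\mathrm{dR}}(X,\,W)\,\ra\, H^1(X,\,\mathcal{O}(W))$ for $W\,=\,\mathcal{H}om(Q_0,\,E_1)$ with its induced flat structure.

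That surjectivity is exactly where all the content of Simpson's result sits, and your justification of it has a genuine gap. For unitary $W$ the argument via the K\"ahler identities is fine, but in the inductive step $W$ is only an iterated extension of unitary flat bundles, and such a local system need \emph{not} underlie a variation of (mixed) Hodge structure: an extension of the trivial local system by itself is given by an arbitrary class in $H^1(X,\,\C)$, with no Hodge-theoretic constraint. So "degeneration at the first page of the Hodge--de Rham spectral sequence of the underlying VMHS" is not available, and the naive five-lemma chase through the two long exact sequences fails as well --- one would need the kernel of cup product with the extension class on $H^1_{\mathrm{dR}}$ to surject onto the kernel of cup product with its $(0,1)$-part on $H^1(\mathcal{O})$, which is not formal. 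A second, related issue is that your induction fixes an arbitrary lift $\nabla^{E_1}$ at each stage, so the flat structure on $W$ entering the next stage is not canonical, and the surjectivity statement would have to be proved for all of these. The clean repair is the one the paper uses: apply the non-abelian Hodge correspondence for semistable Higgs bundles with vanishing Chern classes and zero Higgs field (\cite[Corollary 3.10]{Si} and the remark after it), which handles the entire filtered object at once and produces the flat connection on $E$ itself (the underlying holomorphic structure is unchanged precisely because the Higgs field is zero). To your credit, you correctly isolate this step as the heart of the matter and as the unique point where the K\"ahler hypothesis enters, consistent with the non-K\"ahler counterexample at the end of the paper; but as written the step is asserted rather than proved.
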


\begin{proof}
Take a semi-finite vector bundle $E$ over $X$. Let
\begin{equation}\label{j1}
E\,=\,E_0\, \supseteq\, E_1\, \supseteq\, E_2\, \supseteq\, \cdots\, \supseteq\, E_\ell\,
\supseteq\, E_{\ell+1}\, =\, 0
\end{equation}
be a filtration of holomorphic subbundles such that the quotient $E_i/E_{i+1}$ is a finite vector
bundle for all $0\, \leq\, i\, \leq\, \ell$. Therefore, each $E_i/E_{i+1}$ admits a flat holomorphic
connection whose monodromy is a finite group \cite[Theorem 2.3]{BHS}. In particular, $E_i/E_{i+1}$
is polystable and $c_j(E_i/E_{i+1})\,=\, 0$ for all $j\, \geq\, 1$. In fact, $E_i/E_{i+1}$ admits a
unique flat unitary connection and this connection has finite monodromy (see \cite{BHS}). 

Since each successive quotient $E_i/E_{i+1}$ in \eqref{j1} is polystable and $c_j(E_i/E_{i+1})\,=\, 0$
for all $j\, \geq\, 1$, by \cite[Corollary 3.10 and its following remark]{Si}, the vector bundle $E$ admits
a unique flat holomorphic connection --- denote it by $\nabla^E$ --- such that the following hold:
\begin{itemize}
\item $\nabla^E(E_i)\, \subseteq\, E_i \otimes_{\mathcal{O}_X} \Omega_X^1$ for every
$1\, \leq\, i\, \leq\, \ell$, and

\item the flat holomorphic connection on $E_i/E_{i+1}$ induced by $\nabla^E$ coincides with the
unique unitary flat connection on $E_i/E_{i+1}$, for all $0\, \leq\, i\, \leq\, \ell$.
\end{itemize}
 
Hence $(E,\, \nabla^E)$ is an object of $\EC(X)$. The theorem follows from this, using
\cite[Proposition 2.21]{DM}.
\end{proof}

\subsection*{Example}

Let $M$ be a compact complex manifold. Then, $\EC(M)$ and $\mathcal{C}^{\mathrm{EN}}(M)$ 
can be defined as before, and both form neutral Tannakian categories, once a base point $x_0$ is chosen. We have the homomorphism
$$
\xi^* \,:\, \pi^{\mathrm{EN}}(M,\,x_0) \,\longrightarrow\, \varpi^{\mathrm{EC}}(M,\, x_0)
$$
as in \eqref{eq:3} constructed using the forgetful map $(E,\, \nabla^E) \, \longmapsto\, E$.

However, if $M$ is not K\"ahler, then Theorem \ref{main-thm} is not true in general. 
To construct an example, let $T \,=\, \C/(\Z + \tau\Z)$ be an elliptic curve,
where $\tau\,\in\,  \C$ with $\mathrm{Im}(\tau) \,>\, 0$. Take
$$M\, =\, \mathbb{S}^{2m+1}\times \mathbb{S}^{2n+1},\ \ \ m,\, n\, \geq\, 1,$$
with a complex structure such that we have a 
holomorphic principal $T$--bundle
$$M\ \longrightarrow\ \C\mathbb{P}^m \times \C\mathbb{P}^n$$
(see \cite{Ho} for the construction of such a holomorphic principal $T$--bundle).

Then, $M$ is not K\"ahler because $H^2(M,\, {\mathbb R})\,=\, 0$.
It is known that $h^{1, 0}(M) \,=\, 1$ (see \cite[p.~232]{Ho} or \cite{Bo}).
Let $\eta$ be a non-zero element in $H^1(X,\, \mathcal{O}_X)$. Then, $\eta$ gives an 
extension $E$ of ${\mathcal O}_M$ by ${\mathcal O}_M$
$$
0\,\longrightarrow\, \mathcal{O}_M \,\longrightarrow\, E \,\longrightarrow\, \mathcal{O}_M
\,\longrightarrow\, 0
$$
which does not split holomorphically.
Hence, $E$ is a non-trivial unipotent holomorphic rank 2 vector bundle over $M$.
Since $M$ is simply connected, it follows that $E$ does not admit any
nontrivial flat holomorphic connection. This show that the morphism $\xi^*$
is not closed immersion for this $M$.
 
\section*{Acknowledgements}
SA is supported by the SERB-DST grant(CRG/2023/000477). IB is supported by a J. C. Bose 
Fellowship (JBR/2023/000003).

\section*{Mandatory declarations}

The authors have no conflict of interest to declare that are relevant to this article.
No data were generated or used.

%%%%%%%%%%%%%%%%%%%%%%%%%%%%%%%%%%%%%%%%%%%%%%%%%%%%%%%%%%%%%%%%%%%%%%%

\end{document}